\documentclass{article}
\usepackage{amsmath,amsthm,amssymb}
\usepackage[textwidth=17cm, textheight=24cm]{geometry}

\raggedbottom \relpenalty=10000 \binoppenalty=10000 \tolerance=500

\newtheorem{theorem}{Theorem}

\begin{document}
\thispagestyle{empty}
\title{\textbf{\Large{Remark on a result of Constantine}}}

\author{\textsc{Padraig \'O Cath\'ain}\thanks{\textit{E-mail: p.ocathain@gmail.com}}\\
	Department of Mathematics and Systems Analysis and Department of Computer Science,\\
Aalto University, Finland.
}

\maketitle
\vspace{-0.2cm}
\abstract{
In this short note we construct codes of length $4n$ with $8n+8$ codewords and minimum distance $2n-2$ whenever $4n+4$ is the order of a Hadamard matrix. This generalises work of Constantine who obtained a similar result in the special case that $n$ is a prime power.}

\section{Hadamard matrices}

The Hadamard determinant bound states that an $n\times n$ matrix with entries in the complex unit circle has determinant at most $n^{n/2}$, see \cite{Hadamard1893}. It was  known to Hadamard that the order of a real-valued matrix meeting this bound with equality is $1$, $2$ or divisible by $4$. Such a ${\pm 1}$-matrix is called \textit{Hadamard}.

We refer the reader to the monographs of de Launey and Flannery, and of Horadam for comprehensive information on Hadamard matrices \cite{deLauneyFlannery,HoradamHadamard}, and to Seberry and Yamada for a survey of existence results \cite{SeberryYamadaSurvey}. In this note we will make use only of a small part of the existence theory for Hadamard matrices.

\begin{theorem}\label{HadExist}
There exists a Hadamard matrix of order $n$ if 
\begin{enumerate}
	\item $n = 2^{a}$ for any $a \geq 0$ \cite{Sylvester} 
	\item $n = q+1$ where $q \equiv 3 \mod 4$ is a prime power \cite{Paley1933}
	\item $n = 2(q+1)$ where $q \equiv 1 \mod 4$ is a prime power \cite{Paley1933} 
	\item $n = q(q+2)+1$ where $q$ and $q+2$ are both prime powers \cite{StantonSprott}
	\item $n = (s-1)^{u} + 1$ where $u$ is odd, and $s$ is the order of a skew-Hadamard matrix, in particular $s$ can be as in 1, 2 \cite{SeberryYamadaSurvey}
	\item $n = ab/2$ or $n = abcd/16$ where $a, b, c, d$ are orders of Hadamard matrices \cite{SeberryYamadaProducts, CraigenSeberryZhang}
	\item $n < 668$ \cite{Kharaghani32}
\end{enumerate}
\end{theorem}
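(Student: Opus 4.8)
The statement is a compilation of classical existence theorems, so the plan is to verify each clause by exhibiting the relevant explicit construction or by appealing to the cited source. The unifying idea I would emphasise is that a Hadamard matrix can be assembled from a smaller combinatorial object: a tensor factor, a character sum over a finite field, or a difference set. Only the first few clauses admit a genuinely short self-contained argument, and the role of the theorem is to collect known existence ranges for use later in the note.

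For clause 1 I would record the Sylvester doubling step: if $H$ is Hadamard of order $m$, then the block matrix $M=\bigl(\begin{smallmatrix} H & H \\ H & -H \end{smallmatrix}\bigr)$ satisfies $MM^{T}=2mI$ and so is Hadamard of order $2m$; iterating from the $1\times 1$ matrix $(1)$ produces every order $2^{a}$, which is just the Kronecker product of $a$ copies of the order-$2$ matrix. For clauses 2 and 3 I would introduce the Paley (Jacobsthal) matrix: fix a prime power $q$, let $\chi$ be the quadratic residue character on $\mathbb{F}_{q}$ (extended by $\chi(0)=0$), and set $Q_{ij}=\chi(i-j)$ for $i,j\in\mathbb{F}_{q}$. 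The two facts to check are $QJ=JQ=0$ and $QQ^{T}=qI-J$, where $J$ is all-ones; these follow from standard Jacobsthal-sum identities for $\chi$. Since $\chi(-1)=(-1)^{(q-1)/2}$, the matrix $Q$ is skew when $q\equiv 3\pmod 4$ and symmetric when $q\equiv 1\pmod 4$. In the first case the bordered matrix $H=\bigl(\begin{smallmatrix} 1 & \mathbf{1}^{T} \\ -\mathbf{1} & Q-I \end{smallmatrix}\bigr)$ is (skew-)Hadamard of order $q+1$; in the second case a $2\times 2$ block substitution on the bordered matrix doubles the order to $2(q+1)$.

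Clause 4 is the twin-prime-power construction: a suitable difference set in $\mathbb{F}_{q}\times\mathbb{F}_{q+2}$ yields a Hadamard matrix of order $q(q+2)+1$, and here I would cite Stanton and Sprott \cite{StantonSprott} for the detailed verification rather than reproduce it. The remaining clauses rest on heavier machinery, and I expect these to be the main obstacle, so the honest route is to quote the references. Clause 5 is a recursive construction built on the existence of a skew-Hadamard matrix of order $s$; clause 6 packages the tensor product (order $ab$) together with the sharper product constructions of Seberry--Yamada and Craigen--Seberry--Zhang, where the savings to $ab/2$ and $abcd/16$ come from amicable or disjointly-supported weighing-matrix decompositions and cannot be condensed into a short argument; and clause 7 is the verified combinatorial and computational existence of Hadamard matrices in every admissible order below $668$, for which one simply defers to the tabulated results. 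In short, only clauses 1--3 are proved in place, and even those are standard.
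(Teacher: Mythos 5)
The paper gives no proof of this theorem at all: it is stated purely as a compilation of known results, each clause justified by its citation, followed by the remark that all but the product constructions were already known to Brauer in the 1950s. So your decision to defer clauses 4--7 to the cited sources coincides exactly with what the paper does, and your self-contained sketches of clauses 1--3 go beyond the paper rather than diverge from it. Those sketches are the standard Sylvester and Paley arguments and are essentially sound, but one detail as written fails: with the quadratic-residue core $Q$ (satisfying $QJ=JQ=0$, $QQ^{\top}=qI-J$, and $Q$ skew for $q\equiv 3 \bmod 4$), the bordered matrix $\bigl(\begin{smallmatrix} 1 & \mathbf{1}^{\top} \\ -\mathbf{1} & Q-I \end{smallmatrix}\bigr)$ is \emph{not} Hadamard: each row of $Q-I$ sums to $-1$, so the inner product of the first row with any later row is $1\cdot(-1)+(-1)=-2\neq 0$. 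You need $Q+I$ with that border, i.e.\ $I+S$ for the skew matrix $S=\bigl(\begin{smallmatrix} 0 & \mathbf{1}^{\top} \\ -\mathbf{1} & Q \end{smallmatrix}\bigr)$ --- which has the added benefit of exhibiting the skew-Hadamard matrices invoked in clause 5 --- or equivalently $Q-I$ with border column $+\mathbf{1}$. With that sign corrected, the remainder of your outline (the $2\times 2$ block substitution for the $q\equiv 1 \bmod 4$ case, and citations for the twin-prime-power, skew-recursive, product, and small-order clauses) is a faithful and correct treatment, and arguably more informative than the paper's own citation-only presentation.
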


Apart from the product theorems, all of these constructions were known to Brauer in the 1950s \cite{BrauerHadamard}. According to Brauer, the matrices normally attributed to Paley were known to Hopf and to Schur already by 1920. We did not attempt a comprehensive survey of the existence of Hadamard matrices in Theorem \ref{HadExist}, though it covers the most important known constructions. In particular, de Launey and Gordon have observed that the orders of Paley matrices and their Kronecker powers are dense in the set of known Hadamard orders \cite{deLauneyGordon}. 

\section{Codes from Hadamard matrices}

Now, let $M$ be a $n \times m$ matrix with entries in $\{\pm 1\}$. For $1 \leq i \leq n$ denote by $r_{i}$ the $i^{\textrm{th}}$ row of $M$, and by $r_{n+i}$ its negation. Define $\underline{1}$ to be the all ones vector of length $m$, and form $(0,1)$-vectors by
\[ c_{i} = \frac{1}{2} (r_{i} +\underline{1}).\]
The binary code determined by $M$ is $C_{M} = \{c_{i} \mid 1\leq i \leq 2n\}$. By linearity of the inner product, the (Hamming) distance between codewords $c_{i}$ and $c_{j}$ is \[d(c_{i}, c_{j}) = \frac{m - \langle r_{i},r_{j}\rangle}{2}.\]

As a result, the minimum distance of $C_{M}$ is equal to $\frac{m-|\alpha|}{2}$ where $\alpha$ is the off-diagonal entry of $MM^{\top}$ of largest absolute value. A $4t \times 4t$ matrix $M$ is Hadamard if and only if the minimum distance of $C_{M}$ is $2t$. See Theorem 5.1 of \cite{PetersonCoding} for an early example of this construction in coding theory.

\begin{theorem}\label{mainthm}
Suppose that there exists a Hadamard matrix of order $4t+4i$, where $i < t$. Then there exists a nonlinear code of length $4t$ with $8t+8i$ codewords and minimum distance $2t-2i$.
\end{theorem}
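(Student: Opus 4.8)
The plan is to produce the required code directly as $C_M$ for a well-chosen sign matrix $M$, reading off its parameters from the dictionary set up in the preceding section. Write $N = 4t+4i$ for the order of the given Hadamard matrix $H$. The target length is $4t = N - 4i$ and the target size is $8t+8i = 2N$; since the map $M \mapsto C_M$ sends an $n \times m$ matrix to a length-$m$ code with $2n$ codewords, I would retain all $N$ rows of $H$ and delete $4i$ of its columns. Concretely, I would fix a set $S$ of $4i$ column indices, let $M$ be the $N \times 4t$ matrix obtained by striking those columns from $H$, and take $C = C_M$.

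The heart of the argument is a perturbation estimate for $MM^{\top}$. For two distinct rows $r,s$ of $H$ we have $\langle r,s\rangle = 0$, and writing $r',s'$ for their restrictions to the surviving columns,
\[ \langle r',s'\rangle = \langle r,s\rangle - \sum_{k \in S} r_k s_k = -\sum_{k \in S} r_k s_k, \]
a signed sum of $4i$ terms each equal to $\pm 1$. Hence $|\langle r',s'\rangle| \le 4i$ for every pair of distinct rows, and the same bound holds for a row against the negation of another. Thus the off-diagonal entry $\alpha$ of $MM^{\top}$ of largest absolute value satisfies $|\alpha| \le 4i$, and the formula recorded above gives minimum distance at least $(4t-4i)/2 = 2t-2i$. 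The hypothesis $i<t$ forces $4i < 4t$, so no truncated row can coincide with another row or its negation, and the $2N = 8t+8i$ codewords are genuinely distinct.

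To realise equality, and so obtain minimum distance exactly $2t-2i$, I would choose $S$ deliberately. Any two distinct rows of $H$ agree in exactly $N/2$ positions and disagree in the remaining $N/2 = 2t+2i$, and since $i<t$ we have $2t+2i \ge 4i$; thus $S$ can be taken to lie entirely inside the disagreement set of one fixed pair $r,s$. For that pair $\sum_{k \in S} r_k s_k = -4i$, so $\langle r',s'\rangle = 4i$ and the corresponding codewords sit at distance precisely $2t-2i$, matching the lower bound. I do not expect a serious obstacle: the only real decision is which columns to delete, and the single point needing care is that $|\alpha| \le 4i$ holds for every pair irrespective of $S$ (so no pair can drop below $2t-2i$), while equality for one pair is arranged by the placement above. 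Finally, the code is nonlinear for generic parameters, since its size $8(t+i)$ is only exceptionally a power of two; this is the sense intended in the statement and in Constantine's work.
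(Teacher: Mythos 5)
Your proposal is correct and follows essentially the same route as the paper: delete $4i$ columns of $H$, observe that every off-diagonal entry of $MM^{\top}$ has absolute value at most $4i$, and read off the distance bound $\frac{4t-4i}{2} = 2t-2i$ from the inner-product formula. The two refinements you add --- verifying that the $8t+8i$ codewords are genuinely distinct, and placing the deleted columns inside the disagreement set of one fixed pair of rows so that the minimum distance is \emph{exactly} $2t-2i$ --- go beyond the paper's own proof, which stops at the lower bound, but they do not change the underlying approach.
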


\begin{proof}
Form a $(4t+4i) \times 4t$ matrix $M$ by deleting any $4i$ columns of $H$. Observe that the inner product of any two rows of $M$ differs from the inner product of the corresponding rows of $H$ by at most $4i$. In particular the largest off-diagonal entry of $MM^{\top}$ is at most $4i$, and the minimum distance of $C_{M}$ is at least $\frac{4t-4i}{2} = 2t-2i$, as required.
\end{proof}

For odd prime powers $q$, Constantine has obtained a closely related construction which yields a non-linear code of length $4q$ with $8q$ codewords and minimum distance $2q-2$, using a plug-in construction and properties of quadratic residues \cite{Constantine}. He proposes that such codes might be a useful substitute for Hadamard codes at orders where no Hadamard matrix is known to exist. 

Theorem \ref{mainthm} strengthens Constantine's result in two ways. For any odd prime power $q$, there exists a Hadamard matrix of order $4q+4$ by Theorem \ref{HadExist}: it suffices to consider Kronecker products of Paley and of Sylvester matrices. Applying Theorem \ref{mainthm} with $i = 1$ to such a matrix gives a code of the same length and minimum distance as Constantine's, but with an additional eight codewords. Theorem \ref{HadExist} also gives many additional Hadamard orders to which Theorem \ref{mainthm} can be applied.

\section{Comments on Maximality and Optimality}

Recall that a binary code $C$ is self-complementary if $c_{i} + \underline{1} \in C$ for all $c_{i} \in C$. In this section we will consider only self-complementary codes. We say that a code $C$ is \textit{maximal} if including any additional codeword necessarily decreases the minimum distance, and that $C$ is \textit{optimal} if $|C|$ is maximal among all codes with the same length and minimal distance. In this section we show that for any fixed $i$, all sufficiently large codes obtained from Theorem \ref{mainthm} are maximal, and are within a constant number of codewords of being optimal.

\begin{theorem} 
If $t > 16i^{2} - i$ then any code obtained from Theorem \ref{mainthm} is maximal. 
\end{theorem}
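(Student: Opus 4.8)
The plan is to translate maximality into a statement about the inner products of a putative new codeword with the rows of $M$, and then to rule out such a codeword by a single second-moment identity. Throughout I would write $N = 4t+4i$ for the order of $H$ and let $r_1,\dots,r_N \in \{\pm 1\}^{4t}$ denote the rows of $M$, so that $C_M$ consists of the $2N = 8t+8i$ words associated with $\pm r_1,\dots,\pm r_N$.

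First I would record the reduction. A vector $v \in \{\pm 1\}^{4t}$ gives a word whose distance to the word of $r_k$ is $\tfrac12(4t - \langle r_k,v\rangle)$ and whose distance to the word of $-r_k$ is $\tfrac12(4t + \langle r_k,v\rangle)$. Hence adjoining the corresponding word (equivalently, the complementary pair, keeping the code self-complementary) maintains minimum distance at least $2t-2i$ precisely when $|\langle r_k,v\rangle| \le 4i$ for every $k$. Since $t>i$ forces $\langle r_k, r_k\rangle = 4t > 4i$, any $v$ meeting all of these constraints is automatically distinct from every existing codeword. Thus it suffices to prove that, for $t$ large relative to $i$, there is no $v\in\{\pm 1\}^{4t}$ with $|\langle r_k,v\rangle|\le 4i$ for all $k$.

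The key step is the observation that the columns of $M$ are mutually orthogonal: they are $4t$ distinct columns of the Hadamard matrix $H$, so $M^{\top} M = N\, I_{4t}$. Consequently, for every $v\in\{\pm 1\}^{4t}$,
\[ \sum_{k=1}^{N} \langle r_k, v\rangle^2 \;=\; \|Mv\|^2 \;=\; v^{\top}(M^{\top} M)\,v \;=\; N\,\|v\|^2 \;=\; (4t+4i)\,(4t). \]
If some $v$ satisfied $|\langle r_k,v\rangle|\le 4i$ for all $k$, the left-hand side would be at most $N(4i)^2 = (4t+4i)\,16i^2$, forcing $4t \le 16i^2$, that is $t \le 4i^2$. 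Therefore no such $v$ exists once $t > 4i^2$, and in particular once $t > 16i^2 - i$, which proves maximality.

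I expect the only real subtlety to be the reduction step: one must check that the distance constraints coming from both $r_k$ and $-r_k$ combine into the two-sided bound $|\langle r_k,v\rangle|\le 4i$, and that a qualifying $v$ cannot coincide with an existing codeword (handled by $t>i$). The remaining work—the identity $M^{\top} M = N I$ and the averaging count—is routine; I note that it in fact yields the stronger threshold $t>4i^2$, so the stated bound $t>16i^2-i$ follows with room to spare, and no parity refinement of the inner products is required.
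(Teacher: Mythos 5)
Your proof is correct, and while it shares the paper's underlying philosophy (a second-moment identity forced by Hadamard orthogonality), it implements it in a genuinely different and in fact sharper way. The paper works with the \emph{full} matrix $H$ of order $m=4t+4i$: since the rows of $m^{-1/2}H$ form an orthonormal basis of $\mathbb{R}^{m}$, every $\pm 1$ vector $v$ of length $m$ satisfies $\sum_{j}\langle r_{j},v\rangle^{2}=m^{2}$, so some $|\langle r_{j},v\rangle|\geq \sqrt{m}$, and the hypothesis $t>16i^{2}-i$ is exactly what makes $\frac{m-\sqrt{m}}{2}<2t-2i$. That argument lives in length $m$, so to conclude anything about the punctured code $C_{M}$ one must extend a candidate length-$4t$ word by $4i$ coordinates and absorb the resulting additive loss in the inner products; the paper leaves this step implicit (as written, its proof concludes that $C_{H}$ is maximal). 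Your argument instead exploits \emph{column} orthogonality of the punctured matrix itself: $M^{\top}M=(4t+4i)I_{4t}$, giving the exact identity $\sum_{k}\langle r_{k},v\rangle^{2}=4t(4t+4i)$ for length-$4t$ vectors directly, with no padding and no slack. This buys you the strictly stronger threshold $t>4i^{2}$, from which the stated bound $t>16i^{2}-i$ follows a fortiori since $16i^{2}-i\geq 4i^{2}$ for all $i\geq 0$. Your preliminary reduction is also handled correctly: the two-sided condition $|\langle r_{k},v\rangle|\leq 4i$ is the right translation of "adding $c_{v}$ does not decrease the minimum distance" for a self-complementary code, and the observation that $\langle r_{k},r_{k}\rangle=4t>4i$ rules out $v$ coinciding with an existing codeword.
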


\begin{proof}
Set $m = 4t+4i$ and let $H$ be Hadamard of order $m$. Recall that the rows of $m^{-\frac{1}{2}}H$ form an orthonormal basis of $\mathbb{R}^{m}$; so for any vector $v$ we have the identity 
\[ \|v\|_{2}^{2} = \sum_{i=1}^{m} \| \langle m^{-\frac{1}{2}} r_{i}, v \rangle \|_{2}^{2} = m^{-1} \sum_{i=1}^{m} \| \langle r_{i}, v \rangle \|_{2}^{2}. \]
In particular, if $v$ is a $\pm 1$ vector, then $\|v\|_{2}^{2} = m$, and we have that $ \sum_{i=1}^{m} \| \langle r_{i}, v \rangle \|_{2}^{2} = m^{2}$. Hence, there exists an index $j$ such that either $\langle r_{j}, v \rangle  \geq \sqrt{m}$ or $\langle -r_{j}, v \rangle  \geq \sqrt{m}$. 

Now denote by $c_{v}$ the binary codeword obtained from $v$. Then one of $d(c_{j}, c_{v})$ and $d(c_{m+j}, c_{v})$ is less than $\frac{m-\sqrt{m}}{2} < 2t-2i$. This holds for any vector $v$, so $C_{H}$ is maximal, as claimed.
\end{proof}

For any self-complementary binary code $C$ with length $n$ and minimum distance $d$, the Grey-Rankin bound states that 
\[ |C| \leq \frac{8d(n-d)}{n-(n-2d)^{2}}, \] 
whenever the right-hand side is positive (see \cite{McGuire} for a simple proof). It is straightforward to see that a code with $(n,d) = (4t, 2t)$ has at most $8t$ codewords, and hence that there exists a code with these parameters meeting the Grey-Rankin bound if and only if there exists a Hadamard matrix of order $4t$. More generally, McGuire has given a characterisation of Grey-Rankin optimal codes in terms of quasi-symmetric designs and Hadamard matrices \cite{McGuire}. 

Optimal codes with $(n,d) = (4t-1, 2t-1)$ are obtained by puncturing a Hadamard code in one co-ordinate. One may consider codes punctured twice or three times: it is an easy exercise to see that (for codes of length $>160$) the codes obtained from a Hadamard matrix are respectively within $4$ and within $12$ codewords of the Grey-Rankin bound. The codes of Theorem \ref{mainthm} are obtained by puncturing a Hadamard code $4i$ times. An easy computation shows that in this case the Grey-Rankin bound specialises to 
\[ |C| \leq 8t + 32i^{2} + \frac{128i^{4} - 8i^{2}}{t-4i^{2}}. \] 

For $i=1$, the Grey-Rankin bound is integral only when $t-4$ is a divisor of $120$. McGuire investigated existence of codes meeting the bound for certain small parameter sets, and found that for $t = 5, 7, 9$ codes meeting the bound exist, but not for $t = 6$, see \cite{McGuire}. Clearly when $t > 124$ (i.e. for lengths $>476$) the codes of Theorem \ref{mainthm} are within $24$ codewords of optimality.

More generally, for any fixed $i>0$ and for all sufficiently large $t$, we have that $|C| < 8t + 32i^{2}$. Thus the codes obtained from Theorem \ref{mainthm} are within a constant number of codewords of optimality. Few examples of codes meeting the Grey-Rankin bound are known when $i>0$. McGuire has described a family of codes coming from the action of the symplectic group $\textrm{Sp}_{2n}(2)$ on elliptic hyperovals which have parameters $(n,d) = (2^{2l-1} - 2^{l-1}, 2^{2l-2}-2^{l-1})$ for $l>0$. These symplectic codes are optimal and have size $2^{2l+1}$. Solving for the parameters of Theorem \ref{mainthm}, we find that $4t+4i = 2^{2l-1}$. Such Hadamard matrices exist by Theorem \ref{HadExist}, and we obtain a code with $2^{2l}$ codewords. So even when $i$ grows exponentially, the codes of Theorem \ref{mainthm} remain within a factor of $2$ of optimality.

It remains an open problem to construct, for any fixed $i>0$, an infinite family of codes with parameters $(n,d) = (4t, 2t-2i)$ where $|C|> 8t+8i$. We consider the case $i = 1$ to be particularly interesting. 

\section*{Acknowledgements}

This research was partially supported by the Academy of Finland (grants \#276031, \#282938, and \#283262 and \#283437). The support from the European Science Foundation under the COST Action IC1104 is also gratefully acknowledged. 

\bibliographystyle{abbrv}
\flushleft{
\bibliography{NewBiblio}
}

\end{document}